\title{On a problem of Duke-Erd\H{o}s-R\"odl on
cycle-connected subgraphs}
\author{
Jacob Fox\thanks{Department of Mathematics, Princeton, Princeton, NJ.
Email: {\tt
jacobfox@math.princeton.edu}. Research supported by an NSF Graduate
Research Fellowship and a Princeton Centennial Fellowship.} \and
Benny Sudakov\thanks{ Department of Mathematics,
UCLA,  Los Angeles, CA 90095.
Email: {\tt bsudakov@math.ucla.edu}. Research
supported in part by NSF CAREER award DMS-0546523, NSF grant
DMS-0355497 and by a USA-Israeli BSF grant.}}
\newenvironment{proof}
      {\medskip\noindent{\bf Proof:}\hspace{1mm}}
      {\hfill$\Box$\medskip}
\def\qed{\ifvmode\mbox{ }\else\unskip\fi\hskip 1em plus 10fill$\Box$}
\newtheorem{theorem}{Theorem}[section]
\newtheorem{lemma}[theorem]{Lemma}
\newtheorem{problem}[theorem]{Problem}
\begin{document}
\date{}

\maketitle
\begin{abstract}
In this short note, we prove that for $\beta <1/5$ every graph $G$ with $n$
vertices and $n^{2-\beta}$ edges contains a subgraph $G'$
with at least $c n^{2-2\beta}$ edges such that every pair of
edges in $G'$ lie together on a cycle of length at most 8.
Moreover edges in $G'$ which share a vertex lie together on a cycle of length at most 6.
This result is best possible up to the constant factor and settles
a conjecture of Duke, Erd\H{o}s, and R\"odl.
\end{abstract}

\section{Introduction}
Let $\mathcal{H}$ be a fixed collection of graphs. A graph $G$ is
{\it $\mathcal{H}$-connected} if every pair of edges of $G$ is
contained in a subgraph $H$ of $G$, where $H$ is a member of
$\mathcal{H}$. For example, if $\mathcal{H}$ is the collection of
all paths, then, ignoring isolated vertices,
$\mathcal{H}$-connectedness is equivalent to connectedness. If
$\mathcal{H}$ consists of all paths of length at most $d$, then each
$\mathcal{H}$-connected graph has a diameter at most $d$, while
every graph of diameter $d$ is $\mathcal{H}$-connected for
$\mathcal{H}$ the collection of all paths of length at most $d+2$.
So $\mathcal{H}$-connectedness naturally extends basic notions of
connectivity.

The definition of $\mathcal{H}$-connectedness was introduced by
Duke, Erd\H{o}s, and R\"odl, who initiated the study of this notion
in a series of four papers \cite{DuEr,DuErRo2,DuErRo3,DuErRo4}. A
graph is {\it $C_{2k}$-connected} if it is $\mathcal{H}$-connected
where $\mathcal{H}$ consists of all even-length cycles of length at
most $2k$. The question studied by Duke, Erd\H{o}s, and R\"odl was
to determine the maximum number of edges in a $C_{2k}$-connected
subgraph that one can find in every graph with $n$ vertices and $m$
edges as a function of $k$, $n$, and $m$. The following problem was
considered to be one of the main open problems in this area. It was
first posed by Duke, Erd\H{o}s, and R\"odl \cite{DuErRo2} in 1984,
and discussed in the two subsequent papers \cite{DuErRo3,DuErRo4}.
It also appears in the book {\it Erd\H{o}s on Graphs} by Chung and
Graham \cite{ChGr}.

\begin{problem}
\label{problem}
Is it true that there are constants $c, \beta_0>0$ such that for all
$0\leq \beta \leq \beta_0$ the following holds. Every graph $G$ with $n$ vertices and $n^{2-\beta}$
edges contains a subgraph $G'$ with $cn^{2-2\beta}$
edges such that every two edges of $G'$ lie together on
a cycle of length at most eight?
\end{problem}

It is easy to see that such a result would be best possible up to a multiplicative constant.
Indeed, the bound on the number of edges in $G'$ is tight when $G$ is a collection of
$n^{\beta}$ disjoint complete graphs of size roughly
$n^{1-\beta}$. In \cite{DuErRo2}, Duke, Erd\H{o}s, and R\"odl obtained a weaker result which proves that the
assertion of Problem \ref{problem} is correct if one allows
the cycle length to be at most twelve instead of at most eight. They also showed how to find
in $G$ a $C_6$-connected (and hence also $C_8$-connected) subgraph $G'$ with at least
$cn^{2-3\beta}$ edges.

The analogue of Problem \ref{problem} for graphs of constant density was
solved in \cite{DuErRo3}. In that paper, the authors proved that for each fixed $d>0$, every graph $G$ with $n$
vertices and at least $dn^2$ edges has a subgraph $G'$ on
$(1+o(1))d^2n^2$ edges such that every pair of edges of $G'$ lie together on a
cycle of length at most eight. Unfortunately, the proof of
Duke, Erd\H{o}s, and R\"odl uses Szemer\'edi's regularity lemma and consequently gives nothing
when $d$ tends to zero.

Duke, Erd\H{o}s, and R\"odl \cite{DuErRo2} also asked
whether Problem \ref{problem} holds in the stronger form, where the
subgraph $G'$ has the additional property that edges sharing a vertex lie
together on a cycle of length at most 6. Motivated by this question, we
call a graph {\it strongly $C_{2k}$-connected} if it is
$C_{2k}$-connected and every pair of edges sharing a vertex lie
together on a cycle of length at most $2k-2$.  In this note, we settle Problem
\ref{problem} in its strengthened form for all $\beta < 1/5$.

\begin{theorem}\label{main}
For $0<\beta<1/5$ and sufficiently large $n$, every graph $G$ on $n$
vertices and at least $n^{2-\beta}$ edges has a strongly
$C_8$-connected subgraph $G'$ with at least
$\frac{1}{64}n^{2-2\beta}$ edges.
\end{theorem}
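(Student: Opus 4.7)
My plan is to combine dependent random choice with a hub-vertex construction. After standard reductions to a bipartite subgraph with minimum degree of order $n^{1-\beta}$, I use a maximum-degree vertex $v^*$ as a ``hub'' to pin down a dense core, and dependent random choice to ensure a common-neighborhood property on one side.

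First, I would pass to a bipartite subgraph of $G$ with at least $n^{2-\beta}/2$ edges (via a random partition), and then iteratively delete vertices of degree less than $\frac{1}{4}n^{1-\beta}$ to obtain a bipartite subgraph $H$ with parts $A, B$, minimum degree at least $d := \frac{1}{4}n^{1-\beta}$, and at least $\frac{1}{4}n^{2-\beta}$ edges. Next, choose a vertex $v^* \in B$ of maximum degree in $H$ and set $U_0 := N_H(v^*)$, so $|U_0| \geq d$. The bipartite subgraph $H[U_0 \cup B]$ already contains at least $|U_0| \cdot d \geq \frac{1}{16}n^{2-2\beta}$ edges, and crucially, $v^*$ is adjacent to every vertex of $U_0$.

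I would then apply dependent random choice inside $H[U_0 \cup B]$ to control codegrees within $U_0$: for a random subset $T \subseteq B$ of constant size $t$, pass to $\{u \in U_0 : T \subseteq N_H(u)\}$ and remove one vertex of each pair with small common neighborhood. Together with a final round of low-degree pruning, this produces a subgraph $G'$ with parts $A' \subseteq U_0$ and $B' \ni v^*$, minimum degree of order $n^{1-2\beta}$ on both sides, at least $\frac{1}{64}n^{2-2\beta}$ edges, and the property that every pair of vertices in $A'$ has many common neighbors in $B'$.

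To verify that $G'$ is strongly $C_8$-connected, I would do a case analysis on two edges $e_1, e_2 \in G'$. In the easy cases, the hub $v^*$ together with the minimum-degree property directly yields short cycles: adjacent edges sharing a $B'$-vertex $w \neq v^*$ close a $C_4$ through $v^*$; adjacent edges sharing an $A'$-vertex, with at most one incident to $v^*$, close a $C_6$ by routing the two $B'$-endpoints through $v^*$ via distinct $A'$-neighbors (guaranteed by the minimum-degree bound); vertex-disjoint edges form a $C_8$ by combining both tools. The hardest case is two edges both incident to $v^*$, say $u_1v^*$ and $u_2v^*$, for which we need a $C_6$, equivalently a length-$4$ path between $u_1$ and $u_2$ in $G' \setminus v^*$; this is exactly where the DRC-derived codegree property on $A'$ is essential. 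The main obstacle is balancing the parameters so that $|U_0|$ is large enough to yield $\Omega(n^{2-2\beta})$ edges while the DRC codegree bound remains large enough to supply the required length-$4$ path for \emph{every} pair in $A'$; the restriction $\beta < 1/5$ is expected to arise precisely from this balance.
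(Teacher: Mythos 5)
Your final case analysis is fine as far as it goes: given the properties you claim for $G'$ (hub $v^*$ adjacent to all of $A'$, large minimum degree, and every pair in $A'$ having several common neighbours in $B'$), the $C_4/C_6/C_8$ constructions you describe do work. The genuine gap is that the step you delegate to dependent random choice cannot deliver those properties with the required edge count, and this is precisely where the whole difficulty of the problem lives. With $T\subseteq B$ uniform of constant size $t$, consider a roughly regular instance of degree $\Theta(n^{1-\beta})$ — e.g.\ the extremal example of $n^{\beta}$ disjoint cliques of size $n^{1-\beta}$, after your bipartition and pruning. A vertex of $U_0$ survives the condition $T\subseteq N_H(u)$ with probability $\Theta(n^{-t\beta})$, so $|A'|=O\!\left(n^{1-(t+1)\beta}\right)$ and the number of edges incident to $A'$ is $O\!\left(n^{2-(t+2)\beta}\right)$, which for $t\ge 1$ is at best $n^{2-3\beta}$ — the old Duke--Erd\H{o}s--R\"odl bound, never $\Omega(n^{2-2\beta})$. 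Taking $t=1$ does not rescue the codegree property either: the expected number of pairs of survivors with codegree below $s$ is about $|U_0|^2 s/|B|$, which already exceeds $\mathbb{E}|A'|\approx n^{1-2\beta}$ unless $s<1$, so you cannot clean up all bad pairs by deleting one vertex per pair. In short, demanding that \emph{every} pair of $A'$ have many common neighbours, obtained through this selection, is incompatible with keeping $\frac{1}{64}n^{2-2\beta}$ edges; the ``balancing of parameters'' you postpone is not a technicality but the actual obstruction, and it is not resolved by the proposal.

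The paper gets around exactly this by never asking for an all-pairs codegree property on a large set. It picks a single random vertex $w\in A$, sets $B'\subseteq N_H(w)$, and takes $A'$ to be the vertices of $A$ with at least $\frac{n}{32k^2}$ neighbours \emph{inside} $N_H(w)$ (a codegree-with-$w$ condition, not a ``$T\subseteq N(u)$'' condition); that threshold is what makes the edge bookkeeping close (the loss is at most $n\cdot\frac{n}{32k^2}$, giving Lemma \ref{l3}(iii)). The codegree control it proves is only the weaker statement of Lemma \ref{l3}(ii): each $v\in B'$ has fewer than $\frac{n}{2^7k^2}$ ``bad'' partners in $B'$, and the cycle construction then uses the minimum-degree bound $\frac{n}{2^6k^2}$ on $A'$ (Lemma \ref{l3}(i)) to step around those few bad partners. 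Establishing (ii) needs machinery with no analogue in your sketch: the auxiliary graph $\Gamma$ on $A$ whose edges are pairs of codegree at least $\frac{n}{32k^2}$, the fact that $\Gamma$ has no independent set of size $8k$ (Lemma \ref{l1}), and the Tur\'an-type consequence that inside every common neighbourhood $N_H(u,v)$ only few vertices have small $\Gamma$-degree, which is what makes a random $w$ bad for few pairs. If you want to keep your hub-$v^*$ architecture, you would still need some substitute for this ``few bad partners per vertex, plus enough degree to avoid them'' mechanism; the all-pairs property you aim for is too expensive to buy with the DRC step as described.
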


Our proof combines combinatorial ideas together with a probabilistic
argument which may be called {\em dependent random choice}. Early
versions of this technique were developed in the papers
\cite{Go,KR,Su1}. Later, variants were discovered and applied to a
large variety of extremal problems (see, e.g.,
\cite{KS,AKS,Su2,SuSzVu,DER}). In the concluding remarks, we show
how the same proof can be used to obtain a variant of the main graph
theoretic lemma which is used in the proof of the celebrated
Balog-Szemer\'edi-Gowers theorem. Hence, we wonder if our result
might have new applications in Additive Combinatorics.

\section{Proof of Theorem \ref{main}}

Let $\beta<1/5$, $k=n^{\beta}$ and
let $G$ be a graph with $n$ vertices and at least $n^2/k$ edges.
Since $\beta<1/5$ and $n$ is sufficiently large,
we may assume that $n> 2^{20}k^5$. Delete vertices of minimum
degree one by one until the remaining induced subgraph $G_1$ of $G$ has
minimum degree at least $\frac{n}{2k}$. Since the number of vertices
deleted in this process is at most $n$, we have that the number of remaining edges in $G_1$ is at least
$$e(G_1) \geq e(G)-n \cdot \frac{n}{2k} \geq \frac{n^2}{k}-\frac{n^2}{2k}=\frac{n^2}{2k}.$$
Let $H$ be the maximum bipartite subgraph of $G_1$, and let $A$ and
$B$ denote the vertex classes of $H$. Without loss of generality we
can assume that $|B|\leq |A|$. For a vertex $x \in H$ denote by
$d_H(x)$ its degree, i.e., the number of vertices adjacent to $x$ in
$H$. By maximality of $H$, the degree of every vertex in $H$ is at
least half of its degree in $G_1$ and the number of edges in $H$ is
at least half of the number of edges in $G_1$. Hence the minimum
degree in $H$ is at least $\frac{n}{4k}$ and $H$ has at least
$\frac{n^2}{4k}$ edges. For two vertices $x_1, x_2 \in H$ define the
{\it common neighborhood} $N_H(x_1,x_2)$ to be the set of vertices
of $H$ adjacent to both $x_1$ and $x_2$ and the {\it codegree}
$d_H(x_1,x_2)$ to be the size $|N_H(x_1,x_2)|$. We will later use
the following simple fact.

\begin{lemma}
\label{l1}
If every pair of vertices in a subset $X \subset A$
have codegree in $H$ at most $\frac{n}{32k^2}$, then $|X| <8k$.
\end{lemma}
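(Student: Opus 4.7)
The plan is to prove the contrapositive: assuming $|X| = t \geq 8k$, I will exhibit a pair of vertices in $X$ whose codegree exceeds $n/(32k^2)$, by a standard double-counting argument combined with convexity.

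First I would count in two ways the quantity
\[ S \;=\; \sum_{b \in B} \binom{d_X(b)}{2} \;=\; \sum_{\{x,y\} \subset X} d_H(x,y), \]
where $d_X(b)$ denotes the number of neighbors of $b$ that lie in $X$. Assuming for contradiction that every pair in $X$ has codegree at most $n/(32k^2)$, the right-hand expression gives the upper bound
\[ S \;\leq\; \binom{t}{2} \cdot \frac{n}{32 k^2} \;<\; \frac{t^2 n}{64 k^2}. \]

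Next I would lower-bound $S$ via convexity of $\binom{x}{2}$. Write $E = e(X,B)$ for the number of edges of $H$ between $X$ and $B$. Since every vertex of $A$ has $H$-degree at least $n/(4k)$, with all neighbors in $B$, we have $E \geq tn/(4k)$. Moreover $|B| \leq |A|$ combined with $|A| + |B| \leq n$ yields $|B| \leq n/2$. By Jensen's inequality,
\[ S \;\geq\; |B| \binom{E/|B|}{2} \;=\; \frac{E}{2}\Bigl(\frac{E}{|B|} - 1\Bigr). \]
Since $E/|B| \geq (tn/(4k))/(n/2) = t/(2k) \geq 4$ whenever $t \geq 8k$, the $-1$ can be absorbed to give $S \geq E^2/(4|B|) \geq (tn/(4k))^2/(2n) = t^2 n/(32 k^2)$.

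Comparing the two bounds yields $t^2 n/(32k^2) \leq S < t^2 n/(64k^2)$, a contradiction, so $|X| < 8k$. There is no real obstacle here: the only things to be careful about are the bound $|B| \leq n/2$ (which genuinely uses the convention $|B| \leq |A|$ chosen earlier) and the verification that $E/|B|$ exceeds $2$ so the Jensen bound simplifies cleanly; both fall out of the minimum degree condition $\delta(H) \geq n/(4k)$ together with the hypothesis $t \geq 8k$.
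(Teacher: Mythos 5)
Your proof is correct, and it takes a genuinely different route from the paper. The paper argues by the Bonferroni (truncated inclusion--exclusion) inequality: if $X=\{x_1,\ldots,x_{8k}\}$ had all codegrees at most $n/(32k^2)$, then the number of vertices of $B$ adjacent to some $x_i$ would be at least $\sum_i d_H(x_i)-\sum_{i<j}d_H(x_i,x_j)\geq 8k\cdot\tfrac{n}{4k}-\binom{8k}{2}\tfrac{n}{32k^2}>n$, contradicting $|B|\leq n$. You instead double-count $S=\sum_{b\in B}\binom{d_X(b)}{2}=\sum_{\{x,y\}\subset X}d_H(x,y)$ in the K\H{o}v\'ari--S\'os--Tur\'an style and apply convexity to get a lower bound on $S$ that beats the upper bound $\binom{t}{2}\cdot\tfrac{n}{32k^2}$. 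Both arguments are clean; the paper's is a one-liner and avoids the mild bookkeeping around $|B|\leq n/2$ and the absorption of the $-1$ in $\binom{E/|B|}{2}$, whereas yours is the more standard tool one reaches for when bounding sums of codegrees and transfers more readily to situations where the $8k$-size ``test set'' is not handed to you. The one place to be careful, and you were, is that the Jensen step needs $E/|B|\geq 2$ so the linear term can be absorbed, which is exactly where the hypothesis $t\geq 8k$ and the minimum degree $n/(4k)$ enter.
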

\begin{proof}
Suppose for contradiction that there is a subset
$X=\{x_1,\ldots,x_{8k}\}$ such that every pair of vertices
in it have codegree in $H$ at most $\frac{n}{32k^2}$. By the
Bonferroni inequality (inclusion-exclusion principle), the number of vertices of $B$
adjacent to at least one of $x_1,\ldots,x_{8k}$ is at least
$$\sum_{1 \leq i \leq 8k} d_H(x_i)-\sum_{1 \leq i<j \leq 8k} d_H(x_i,x_j)
\geq 8k\frac{n}{4k}-{8k \choose 2}\frac{n}{32k^2}>n.$$
Therefore, the size of $B$ is larger than the total number of vertices $n$. This contradiction completes the proof.
\end{proof}

Define an auxiliary graph $\Gamma$ on $A$ where two vertices in $\Gamma$ are
adjacent if their codegree in $H$ is at least $\frac{n}{32k^2}$.
Then the previous lemma simply states that $\Gamma$ has no independent set of size
$8k$. Let $\Gamma'$ be an induced subgraph of $\Gamma$ on $v \geq 16k$ vertices.
If the number of edges in $\Gamma'$ is at most $\frac{v^2}{32k}$, then its average
degree is at most $\frac{v}{16k}$. Therefore, by
Tur\'an's theorem \cite{Tu}, it has an independent set of size at least
$v/(\frac{v}{16k}+1) \geq 8k$, which  contradicts Lemma \ref{l1}.
Thus we have the following claim.

\begin{lemma}  Every induced subgraph
of $\Gamma$ on $v \geq 16k$ vertices has more than $\frac{v^2}{32k}$
edges.
\end{lemma}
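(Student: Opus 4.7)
The plan is to prove the claim by contradiction, using the auxiliary graph $\Gamma$ together with the standard independent-set lower bound from Tur\'an's theorem. Suppose, toward contradiction, that there is an induced subgraph $\Gamma'$ of $\Gamma$ on $v \geq 16k$ vertices with at most $v^2/(32k)$ edges. Then the average degree of $\Gamma'$ is at most $2\cdot v^2/(32k)/v = v/(16k)$.

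Next I would invoke the classical consequence of Tur\'an's theorem (or equivalently the Caro--Wei bound): any graph on $v$ vertices with average degree $d$ contains an independent set of size at least $v/(d+1)$. Applied to $\Gamma'$, this yields an independent set of size at least
\[
\frac{v}{\tfrac{v}{16k}+1}.
\]
A short rearrangement shows that this quantity is $\geq 8k$ precisely when $v \geq 16k$, which is exactly the hypothesis at hand; so $\Gamma'$ contains an independent set $X$ of size at least $8k$.

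Finally, since $\Gamma'$ is an induced subgraph of $\Gamma$, the set $X$ is also independent in $\Gamma$. By the definition of $\Gamma$, this means that $X \subset A$ is a set of at least $8k$ vertices whose pairwise codegrees in $H$ are all strictly less than $n/(32k^2)$. But this directly contradicts Lemma~\ref{l1}, completing the proof. There is no real obstacle here: the only thing to verify carefully is the elementary inequality $v/(v/(16k)+1) \geq 8k$ under the hypothesis $v \geq 16k$, and the standard ``average degree $d$ implies independence number $\geq v/(d+1)$'' fact, both of which are routine.
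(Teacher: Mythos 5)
Your proof is correct and follows essentially the same route as the paper: bound the average degree by $v/(16k)$, apply the Tur\'an-type bound to extract an independent set of size $v/(v/(16k)+1)\geq 8k$, and contradict Lemma~\ref{l1}. The rearrangement $v/(v/(16k)+1)\geq 8k \iff v\geq 16k$ checks out, so there is nothing to add.
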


In particular, in any induced subgraph $\Gamma_1$ of $\Gamma$, there are at
most $\frac{n}{2^{12}k^4}$ vertices of degree at most
$\frac{n}{2^{16}k^5}$. Otherwise, the subgraph $\Gamma' \subset \Gamma_1$
induced by the vertices of degree at most $\frac{n}{2^{16}k^5}$
has $v \geq \frac{n}{2^{12}k^4} \geq 16k$ vertices and has
at most
$$\frac{1}{2}v \cdot \frac{n}{2^{16}k^5} =
\frac{1}{32k} v \cdot \frac{n}{2^{12}k^4} \leq \frac{v^2}{32k}$$
edges, contradicting the previous lemma.

We say that a vertex $w \in A$ is {\it bad with respect to a pair
$\{u,v\}$} of vertices of $B$ if $w \in N_H(u,v)$ and $w$ has degree
at most $\frac{n}{2^{16}k^5}$ in the induced subgraph
$\Gamma[N_H(u,v)]$ of the auxiliary graph $\Gamma$.

\begin{lemma}
\label{l2}
Let $u,v$ be two vertices in $B$. Pick a vertex $w$ in $A$ uniformly
at random. Let $\cal E$ be the event that $w$ is bad with respect to the
pair $\{u,v\}$. The probability of event $\cal E$ is at most
$\frac{n}{2^{12}k^4|A|}$.
\end{lemma}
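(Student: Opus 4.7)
The plan is to recognize that the lemma is essentially immediate from the observation stated just above it, once one applies that observation to the correct induced subgraph of $\Gamma$. Specifically, I would apply the bound on low-degree vertices to the induced subgraph $\Gamma_1 = \Gamma[N_H(u,v)]$.

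First I would unpack the definition of ``bad''. A vertex $w \in A$ is bad with respect to $\{u,v\}$ precisely when $w \in N_H(u,v)$ and $w$ has degree at most $\frac{n}{2^{16}k^5}$ in $\Gamma[N_H(u,v)]$. Rephrased: the bad vertices are exactly the vertices of $\Gamma_1$ whose degree in $\Gamma_1$ is at most $\frac{n}{2^{16}k^5}$. All non-bad vertices of $A$ either lie outside $N_H(u,v)$ or have $\Gamma_1$-degree exceeding this threshold, so we lose nothing by identifying ``bad'' with ``low-degree in $\Gamma_1$''.

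Next I would invoke the observation from the paragraph preceding the lemma: in any induced subgraph of $\Gamma$, there are at most $\frac{n}{2^{12}k^4}$ vertices of degree at most $\frac{n}{2^{16}k^5}$. Applying this with $\Gamma_1 = \Gamma[N_H(u,v)]$ yields that the number of vertices bad with respect to $\{u,v\}$ is at most $\frac{n}{2^{12}k^4}$. Since $w$ is chosen uniformly at random from $A$, the probability that $w$ is bad equals the number of bad vertices divided by $|A|$, which is at most $\frac{n}{2^{12}k^4 |A|}$, exactly as claimed.

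I do not anticipate any real obstacle here, since the preceding observation is stated for arbitrary induced subgraphs of $\Gamma$ and therefore applies to $\Gamma[N_H(u,v)]$ without any side conditions on $u$ and $v$; the threshold $\frac{n}{2^{12}k^4} \ge 16k$ needed for that observation is guaranteed by the global assumption $n > 2^{20}k^5$. So the only real ``work'' is the definitional unpacking, and the probability estimate follows from a single division by $|A|$.
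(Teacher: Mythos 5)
Your proof is correct and takes essentially the same approach as the paper: apply the observation preceding the lemma to the induced subgraph $\Gamma[N_H(u,v)]$ to bound the number of bad vertices by $\frac{n}{2^{12}k^4}$, then divide by $|A|$. The paper phrases the final step as a conditional probability factorization $\mathbb{P}[w\in N_H(u,v)]\cdot\mathbb{P}[w\text{ is bad}\mid w\in N_H(u,v)]$ in which the $t=|N_H(u,v)|$ factors cancel, but that is arithmetically identical to your direct count.
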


\noindent {\bf Proof:}\hspace{1mm} Let $t$ denote the cardinality of
$N_H(u,v)$. The probability that $w \in N_H(u,v)$ is given by
$|N_H(u,v)|/|A| = t/|A|$. Since, by discussion in the previous
paragraph at most $\frac{n}{2^{12}k^4}$ vertices in $N_H(u,v)$ have
degree at most $\frac{n}{2^{16}k^5}$ in the induced subgraph
$\Gamma[N_H(u,v)]$ of $\Gamma$, then the probability that a vertex
picked uniformly at random from $N_H(u,v)$ has degree at most
$\frac{n}{2^{16}k^5}$ in $\Gamma[N_H(u,v)]$ is at most
$\frac{n}{2^{12}k^4}\frac{1}{t}$. Hence, the probability of the
event $\cal E$ satisfies
$$\hspace{1.7cm}
\mathbb{P}[{\cal E}] =\mathbb{P}[w \in N_H(u,v)] \cdot
\mathbb{P}[w~\mbox{is bad}~|~w \in N_H(u,v)]\leq \frac{t}{|A|}\cdot\frac{n}{2^{12}k^4}\frac{1}{t}=\frac{n}{2^{12}k^4|A|}.
 \hspace{1.7cm} \Box$$

Pick a vertex $w \in A$ uniformly at random. Let $Y$ be the random
variable counting the number of pairs $\{u,v\}$ in $B$
such that $w$ is bad with respect to $\{u,v\}$. Since there are
${|B| \choose 2}$ pairs of elements of $B$ and
$|B| \leq |A| \leq n$, then
by Lemma \ref{l2}, we have
$$\mathbb{E}[Y] \leq {|B| \choose
2}\frac{n}{2^{12}k^4|A|} < \frac{n^2}{2^{13}k^4}.$$ Hence there
exist a choice of $w$ such that the number of pairs $\{u,v\}$ in $B$
for which $w$ is bad is less than $\frac{n^2}{2^{13}k^4}$. Pick such
a $w$ and delete all vertices from $A$ that have fewer than
$\frac{n}{32k^2}$ neighbors in $N_H(w)$. That is, delete those
vertices in $A$ that are not adjacent to $w$ in auxiliary graph
$\Gamma$. Let $A'$ be the remaining subset of $A$.

Delete one by one vertices $v$ from $N_H(w)$ for which there are at least
$\frac{n}{2^7k^2}$  vertices $u$ in the remaining set such that
$w$ is bad for $\{u,v\}$. Since $w$ is bad only for at most $\frac{n^2}{2^{13}k^4}$ pairs, it is easy to see that
we deleted at most
\begin{equation}
\label{eq1}
\frac{n^2/(2^{13}k^4)}{n/(2^7k^2)}=\frac{n}{2^6k^2}
\end{equation}
vertices. Denote the remaining subset of $N_H(w)$ by $B'$.  Note
that $|B'| \geq |N_H(w)|-\frac{n}{2^6k^2}=\frac{n}{4k}-\frac{n}{2^6k^2}\geq \frac{n}{5k}$.
By definition of $B'$, we have that for every
$v \in B'$, there are fewer than
$\frac{n}{2^7k^2}$ vertices $u \in B$ such that $w$ is bad for pair $\{u,v\}$.
Let $G'$ be the bipartite subgraph of $H$ induced by $A' \cup
B'$. We will show that this graph satisfies the assertion of Theorem \ref{main}.
The next lemma summarizes several important properties of $G'$.

\begin{lemma}
\label{l3}
{\bf(i)}\, The degree in $G'$ of every vertex in $A'$ is at least $\frac{n}{2^6k^2}$.\\
{\bf (ii)}\, For every vertex $v \in B'$ there are fewer than $\frac{n}{2^7k^2}$ vertices $u \in B'$ such that
$\{v,u\}$ have less than $\frac{n}{2^{16}k^5}$ common neighbors in $A'$.\\
{\bf (iii)}\, The number of edges in $G'$ is at least
$\frac{n^2}{2^6k^2}$.
\end{lemma}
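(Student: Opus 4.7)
The plan is to dispatch the three parts one by one, each via a short counting argument driven by the definitions of $A'$ and $B'$.

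For part (i), I will use that $A' = N_\Gamma(w)$, so by definition of $\Gamma$ every $a \in A'$ satisfies $|N_H(a) \cap N_H(w)| \geq n/(32k^2)$. These common neighbors all lie in $N_H(w)$, and $B'$ is obtained from $N_H(w)$ by removing only the at most $n/(2^6k^2)$ vertices counted in (\ref{eq1}). Subtracting gives $d_{G'}(a) = |N_H(a) \cap B'| \geq n/(32k^2) - n/(2^6k^2) = n/(2^6k^2)$, exactly the required bound.

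For part (ii), the key identity is that for any $u,v \in B' \subseteq N_H(w)$ one has $w \in N_H(u,v)$, and since $A' = N_\Gamma(w)$, the degree of $w$ in the induced subgraph $\Gamma[N_H(u,v)]$ equals $|A' \cap N_H(u,v)|$. Hence ``$w$ is bad for $\{u,v\}$'' in the sense of Lemma \ref{l2} is equivalent to the $A'$-codegree of $u$ and $v$ being at most $n/(2^{16}k^5)$. Since $v \in B'$ survived the deletion process, at the time of its check it had fewer than $n/(2^7k^2)$ partners in the then-remaining set making $w$ bad; by monotonicity (the candidate set only shrinks), the same bound persists in the final $B'$, and the conclusion of (ii) follows, since $\{u : \text{codegree} < n/(2^{16}k^5)\}$ is contained in $\{u : w\text{ bad for }\{u,v\}\}$.

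For part (iii), I will count edges from the $B'$ side, because no useful lower bound on $|A'|$ is available and part (i) by itself therefore cannot deliver $n^2/(2^6 k^2)$ edges. On one hand, each $v \in B' \subseteq B$ has $d_H(v) \geq n/(4k)$ by the minimum-degree condition on $H$, giving $e_H(B', A) \geq |B'| \cdot n/(4k) \geq n^2/(20k^2)$ via $|B'| \geq n/(5k)$. On the other hand, every $a \in A \setminus A'$ has fewer than $n/(32k^2)$ neighbors in $N_H(w) \supseteq B'$ by the defining property of $A'$, so $e_H(B', A \setminus A') < n \cdot n/(32k^2) = n^2/(32k^2)$. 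Subtracting and noting that $1/20 - 1/32 > 1/64$ yields $e(G') = e_H(B', A') > n^2/(2^6 k^2)$. The asymmetry that forces this separate counting for (iii) is the only real subtlety; the rest is bookkeeping directly from the definitions.
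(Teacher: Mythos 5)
Your proof is correct and follows essentially the same route as the paper's, including, for part (ii), translating ``$w$ is bad for $\{u,v\}$'' into the $A'$-codegree of $u,v$ being small (the paper phrases this contrapositively for good pairs, but it is the same observation). The only harmless variation is in part (iii), where you lower-bound $e_H(B',A)$ directly from $|B'|\geq n/(5k)$ and the minimum degree $n/(4k)$, then subtract the at most $n^2/(32k^2)$ edges into $A\setminus A'$ in one step to get $3n^2/(160k^2)>n^2/(2^6k^2)$, whereas the paper begins with $e_H(N_H(w),A)\geq n^2/(16k^2)$ and performs two separate subtractions; both routes use the same facts and land on the same bound.
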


\begin{proof} {\bf (i)}\, Recall that to obtain $A'$ we removed from $A$ all vertices of small degree
in $N_H(w)$.
Thus the vertices in $A'$ all have degree at least
$\frac{n}{32k^2}$ in $N_H(w)$. Also by (\ref{eq1}), we deleted at most
$\frac{n}{2^6k^2}$ vertices from $N_H(w)$ to obtain $B'$. Therefore, all vertices from $A'$ still have
at least $\frac{n}{2^6k^2}$ remaining neighbors in $B'$.

{\bf (ii)}\, Let $\{v,u\}$ be a pair of vertices in $B'$ for which
$w$ is good. By definition, this means that there are at least
$\frac{n}{2^{16}k^5}$ vertices $z$ in $A$ such that $z$ is a common
neighbor of $\{v,u\}$ and the codegree of $z$ and $w$ is at least
$\frac{n}{32k^2}$. All these vertices $z$ have high degree in
$N_H(w)$ and were not deleted when we constructed $A'$. This implies
that there are at least $\frac{n}{2^{16}k^5}$  common neighbors of
pair $\{v,u\}$ in $A'$. To conclude the proof of this part note that
by our construction for every vertex $v \in B'$ there are less than
$\frac{n}{2^7k^2}$ vertices $u \in B'$ such that $w$ is bad for
$\{v,u\}$.

{\bf (iii)}\, Since the minimum degree in $H$ is at least $\frac{n}{4k}$,
we have that $|N_H(w)| \geq \frac{n}{4k}$ and the number of edges between $N_H(w)$ and $A$
is at least $\frac{n}{4k}|N_H(w)| \geq \frac{n^2}{16k^2}$.
Since the vertices we deleted from $A$ all have degree at most
$\frac{n}{32k^2}$ in $N_H(w)$, the total number of remaining edges
between $A'$ and $N_H(w)$ is at least
$$ \frac{n^2}{16k^2}-\frac{n}{32k^2}|A| \geq \frac{n^2}{16k^2}-\frac{n^2}{32k^2}=\frac{n^2}{32k^2}.$$
By (\ref{eq1}), the number of edges between $A'$ and $N_B(w) \setminus B'$ is at most
$$|A||N_H(w) \setminus B'| \leq n \cdot \frac{n}{2^6k^2} \leq \frac{n^2}{2^6k^2}.$$
Hence, the number of edges between $A'$ and $B'$, which is the number of
edges of $G'$, is at least $\frac{n^2}{32k^2}-\frac{n^2}{2^6k^2}=\frac{n^2}{2^6k^2}$.
\end{proof}

Having finished all the necessary preparation we are now ready to complete the proof of
Theorem \ref{main}. Recall that $n>2^{20}k^5$ and let $(a,b),(a',b') \in A' \times B'$ be two
edges of $G'$.

{\bf Case 1:} $(a,b)$ and $(a',b')$ do not share a vertex. By
properties (i) and (ii) of Lemma \ref{l3}, there are at least
$d_{G'}(a)-\frac{n}{2^7k^2}\geq \frac{n}{2^6k^2}-\frac{n}{2^7k^2}=\frac{n}{2^7k^2}$ neighbors $b_1$ of $a$ such that
$b'$ and $b_1$ have  at least $\frac{n}{2^{16}k^5}>4$ common neighbors in $A'$.
Fix any such $b_1\not =b$ and let $a_1$ be a common neighbor of $\{b',b_1\}$ which is different from
$a, a'$. Similarly, we can pick a neighbor $b_2$ of $a'$ different from $b, b', b_1$ such that
$b$ and $b_2$ have at least $\frac{n}{2^{16}k^5}>4$ common neighbors in $A'$.
Let  $a_2$ be a common neighbor of $\{b,b_2\}$ which is distinct from
$a, a', a_1$. Then $a,b_1,a_1,b',a',b_2, a_2,b,a$ form an 8-cycle which contains edges
$(a,b), (a',b')$.

{\bf Case 2:} $a=a'$. Let $a_1$ be a neighbor of $b$ with $a_1 \not
=a'$. (Note that by property (ii) of the previous lemma the degree
of $b$ is at least $\frac{n}{2^{16}k^5}>4$). Then, as in the
previous case, we have that there is a neighbor $b_1$ of $a_1$
different from $b, b'$ such that $b'$ and $b_1$ have  at least
$\frac{n}{2^{16}k^5}>4$ common neighbors in $A'$. Let  $a_2$ be a
common neighbor of $\{b',b_1\}$ which is distinct from $a, a_1$.
Then $a,b,a_1,b_1,a_2,b', a$ form a 6-cycle which contains edges
$(a,b), (a,b')$.

{\bf Case 3:} $b=b'$. Let $b_1$ be a neighbor of $a$ with $b_1 \not
=b$. Then again, as in case 1, there is neighbor $b_2$ of $a'$ different from $b, b_1$ such that
$b_1$ and $b_2$ have  at least $\frac{n}{2^{16}k^5}>4$ common neighbors in $A'$.
Let  $a_2$ be a common neighbor of $\{b_1,b_2\}$ which is distinct from
$a, a'$. Then $a,b,a',b_2,a_2,b_1, a$ form a 6-cycle which contains edges
$(a,b), (a',b)$. \hfill $\Box$

\section{Concluding Remarks}
\begin{itemize}

\item
We suspect that the approach which was used to settle Problem \ref{problem} with some changes might work also
for values of $\beta$ larger than $1/5$. On the other hand, since for our proof it is crucial
to have vertices with large codegree, it surely fails if $\beta \geq 1/2$. It
would be very interesting to determine all values of $\beta$ for which Problem \ref{problem}
have a positive answer. For every $\beta$ that is sufficiently close to 1 there are graphs with
$n^{2-\beta}$ edges and no 8-cycle (see, e.g., \cite{B}). Clearly, for such $\beta$
the answer to this problem is negative.

\item Duke, Erd\H{o}s, and R\"odl \cite{DuErRo2} showed that for
$0<\beta< 1/2$ and a graph $G$ on $n$ vertices and at least
$n^{2-\beta}$ edges, there is a $C_6$-connected subgraph $G'$ on at
least $cn^{2-3\beta}$ edges, and this result is tight up to the
multiplicative constant $c$. However, it is still open whether this
result can be strengthened to show that every graph $G$ with $n$
vertices and $n^{2-\beta}$ edges has a strongly $C_6$-connected
subgraph $G'$ with at least $cn^{2-3\beta}$ edges. Duke, Erd\H{o}s,
and R\"odl \cite{DuErRo2} proved that such a graph $G$ will have a
strongly $C_6$-connected subgraph $G'$ with at least $cn^{2-5\beta}$
edges.

\item The Balog-Szemer\'edi-Gowers theorem is a very useful tool in Additive
Combinatorics. For example, it is an important ingredient in Gowers' proof
\cite{Go} of Szemer\'edi's theorem on arithmetic progressions in
dense sets. For detailed discussion and more applications of this theorem see, e.g., the
books by Nathanson \cite{Na} and by Tao and Vu \cite{TaVu}.

Let $A$ and $B$ be sets of integers. The {\it sumset} $A+B$ is
defined to be the collection of sums $a+b$ with $a \in A$, $b \in
B$. For a bipartite graph $G=(A,B;E)$, the {\it partial sumset}
$A+_G B$ is defined to be the collection of sums $a+b$ with $(a,b)
\in E(G)$. The Balog-Szemer\'edi theorem \cite{BaSz} states that for $A$ and
$B$ sets of $n$ integers with $|E(G)| \geq n^2/k$ and $|A+_G B| \leq
cn$ for some $k$ and $c$, there are $A' \subset A$ and $B' \subset
B$ such that $|A'|,|B'| \geq n/K$ and $|A'+B'| \leq Cn$, where $K$ and
$C$ only depend on $k$ and $c$. The original proof of this theorem gave
a poor bound on $K$ and $C$ in terms of $k$ and $c$. Gowers \cite{Go}
discovered a new proof in which $K$ and $C$ can be taken to be
polynomials in $k$ and $c$.

One can deduce the Balog-Szemer\'edi-Gowers theorem rather quickly
from a graph-theoretic lemma proved by Sudakov, Szemer\'edi,
and Vu \cite{SuSzVu}, which essentially says that for every dense
bipartite graph $G=(A,B;E)$ with $|A|=|B|$, there are linear-sized
subsets $A' \subset A$ and $B' \subset B$ such that for every pair
$(a,b) \in A' \times B'$, there are a quadratic number of paths in $G$
of length three between $a$ and $b$. The following theorem
strengthens this graph-theoretic lemma, showing that the paths of
length three can be taken to lie entirely within subgraph of $G$ induced by $A' \cup B'$.

\begin{theorem}\label{graphtheorylemma}
For every bipartite graph $G=(A,B;E)$ with $n \geq 2^{18}k^5$
vertices and $|E| \geq n^2/k$ edges, there are subsets $A' \subset A$
and $B' \subset B$ such that the subgraph $G'$ of $G$ induced by $A'
\cup B'$ has at least $\frac{n^2}{2^6k^2}$ edges and for every $a
\in A'$ and $b \in B'$, there are at least $\frac{n^2}{2^{24}k^7}$ paths
between $a$ and $b$ in $G'$ of length three.
\end{theorem}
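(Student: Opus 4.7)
The plan is to rerun the dependent--random--choice construction from the proof of Theorem \ref{main} directly on the bipartite graph $G$, and to read off the path count from the structural properties of the resulting induced subgraph $G'$ that are already recorded in Lemma \ref{l3}.

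Since $G$ is already bipartite, one can skip the max--bipartite step and perform only a mild degree cleanup: iteratively delete every vertex whose current degree is less than $n/(2k)$. At most $n$ such deletions occur, each removing at most $n/(2k)$ edges, so at least $n^2/(2k)$ edges survive and the remaining bipartite subgraph has minimum degree at least $n/(2k)$. This plays the role of the graph $H$ in Section 2 (with better constants), so the auxiliary graph $\Gamma$, Lemmas \ref{l1}--\ref{l2}, the random choice of $w \in A$, and the construction of $A'$ and $B'$ all go through verbatim. The induced subgraph $G'$ on $A' \cup B'$ then satisfies the three properties of Lemma \ref{l3}; in particular, $e(G') \geq n^2/(2^6 k^2)$ is exactly the required edge bound.

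To count length--three paths, fix any $a \in A'$ and $b \in B'$. Such a path has the form $a, b_1, a_1, b$ with $b_1 \in N_{G'}(a) \subseteq B'$ and $a_1 \in N_{G'}(b_1, b) \cap A'$. Lemma \ref{l3}(i) gives $|N_{G'}(a)| \geq n/(2^6 k^2)$, while Lemma \ref{l3}(ii) says that fewer than $n/(2^7 k^2)$ of these neighbors $b_1$ can have codegree with $b$ smaller than $n/(2^{16} k^5)$ in $A'$. Thus at least $n/(2^7 k^2)$ ``good'' choices of $b_1$ survive (automatically distinct from $b$ since $G'$ is bipartite), and for each of them there are at least $n/(2^{16} k^5) - 1 \geq n/(2^{17} k^5)$ choices of $a_1 \neq a$, using $n \geq 2^{18} k^5$. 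Multiplying yields at least $n^2/(2^{24} k^7)$ length--three paths from $a$ to $b$, as claimed.

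There is no substantive obstacle: the dependent random choice used to prove Theorem \ref{main} was essentially engineered to produce a subgraph in which every pair in $A' \times B'$ is joined by many length--three paths, and the present theorem simply quantifies this consequence. The only routine bookkeeping items are verifying that the relaxed hypothesis $n \geq 2^{18} k^5$ suffices (it does, because the factor--of--two losses coming from passing to a max--bipartite subgraph are bypassed here) and discarding the at most one degenerate walk where $a_1 = a$ for each choice of $b_1$.
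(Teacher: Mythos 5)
Your proposal is correct in substance and follows essentially the same route as the paper: rerun the Section~2 construction on the already-bipartite $G$ (only the min-degree cleanup is needed, and the $|B|\leq|A|$ normalization can be arranged by swapping the sides), invoke Lemma \ref{l3}, and count paths $a,b_1,a_1,b$ exactly as in the paper's proof. The one flaw is your claim that $b_1$ is ``automatically distinct from $b$ since $G'$ is bipartite'': this is false, as $b_1$ and $b$ both lie in $B'$, so you must discard at most one choice of $b_1$; with your crude estimate $\frac{n}{2^{16}k^5}-1\geq\frac{n}{2^{17}k^5}$ this correction would leave you just short of $\frac{n^2}{2^{24}k^7}$, but the constant is recovered by bounding the losses as the paper does, namely $\left(\frac{n}{2^7k^2}-1\right)\left(\frac{n}{2^{16}k^5}-1\right)\geq \frac{2047}{2048}\cdot\frac{3}{4}\cdot\frac{n^2}{2^{23}k^7}\geq\frac{n^2}{2^{24}k^7}$ using $n\geq 2^{18}k^5$.
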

\begin{proof}
The proof follows easily from Lemma \ref{l3}.
By properties (i) and (ii) of this lemma, there are at least
$d_{G'}(a)-\frac{n}{2^7k^2}\geq \frac{n}{2^6k^2}-\frac{n}{2^7k^2}=\frac{n}{2^7k^2}$ neighbors $b_1$ of $a$ such that
pair $\{b,b_1\}$ have  at least $\frac{n}{2^{16}k^5}$ common neighbor in $A'$. For any such $b_1 \not =b$ and any common neighbor $a_1\not =a$
we have a path of length
three $a, b_1, a_1, b$. The number of such paths is clearly
at least $\left(\frac{n}{2^7k^2}-1\right) \left(\frac{n}{2^{16}k^5}-1\right) \geq \frac{n^2}{2^{24}k^7}$.
\end{proof}

We wonder if this theorem might have new applications in Additive Combinatorics.
\end{itemize}

\noindent {\bf Acknowledgment.}\, We would like to thank Daniel
Martin for pointing out an error in an earlier version of this
paper.

\end{document}